\documentclass{amsart}
\usepackage{mathtools, microtype, mathrsfs, xfrac, hyperref, amssymb, enumerate, xspace, todonotes}
\usepackage[latin1]{inputenc}
\usepackage{tikz-cd}

%\usepackage[notcite, notref]{showkeys}

%%%%%%%%%%%%%%%%%%%%%%%%%%%%%%%%%%%%%%%%%

\numberwithin{equation}{section}

\setcounter{tocdepth}{1}

\numberwithin{subsection}{section}

\allowdisplaybreaks[1]

%%%%%%%%%%%%%%%%%%%%%%%%%%%%%%%%%%%%%%%%%

\newtheorem*{namedtheorem}{\theoremname}
\newcommand{\theoremname}{testing}
\newenvironment{named}[1]{\renewcommand\theoremname{#1}
\begin{namedtheorem}}
{\end{namedtheorem}}

\newtheorem{theorem}{Theorem}[section]
\newtheorem{proposition}[theorem]{Proposition}
\newtheorem{proposition-definition}[theorem]
{Proposition-Definition}
\newtheorem{corollary}[theorem]{Corollary}

\theoremstyle{definition}

\newtheorem{remark}[theorem]{Remark}

\theoremstyle{remark}

%%%%%%%%%%%%%%%%%%%%%%%%%%%%%%%%%%%%%%%%%

%%%%%%%%%%%%%%%%%%%%%%%%%%%%%%%%%%%%%%%%

\renewcommand{\mathcal}{\mathscr}

\newcommand\cA{\mathcal{A}} \newcommand\cB{\mathcal{B}}

 \newcommand\cP{\mathcal{P}}

 \newcommand\cV{\mathcal{V}}
 \newcommand\cX{\mathcal{X}}

\renewcommand\AA{\mathbb{A}}

\newcommand\GG{\mathbb{G}} 
 
 \newcommand\LL{\mathbb{L}}
 \newcommand\NN{\mathbb{N}}

 \newcommand\ZZ{\mathbb{Z}}

\newcommand\rK{\mathrm{K}} 
 
\newcommand\rO{\mathrm{O}}

\newcommand\rmm{\mathrm{m}}

%%%%%%%%%%%%%%%%%%%%%%%%%%%%%%%%%%%%%%%%

\newcommand\arr{\ifinner\to\else\longrightarrow\fi}

\newcommand\arrto{\ifinner\mapsto\else\longmapsto\fi}

\newcommand{\eqdef}{\mathrel{\smash{\overset{\mathrm{\scriptscriptstyle def}} =}}}

\renewcommand\th{^\text{th}}

\def\displaytimes_#1{\mathrel{\mathop{\times}\limits_{#1}}}

\def\displayotimes_#1{\mathrel{\mathop{\bigotimes}\limits_{#1}}}

\newcommand\spec{\operatorname{Spec}}

\newcommand\generate[1]{\langle #1 \rangle}

%these define dashes that will not break,
%but allow the next word to be hyphenated

\newlength{\ignora}

\renewcommand{\setminus}{\smallsetminus}

\newcommand{\mmu}{\boldsymbol{\mu}}

\newcommand{\gm}{\GG_{\rmm}}

\newcommand{\GL}{\mathrm{GL}}
\newcommand{\SL}{\mathrm{SL}}
\newcommand{\PGL}{\mathrm{PGL}}

%The next few lines define a \widecheck command%
\DeclareFontFamily{U}{mathx}{\hyphenchar\font45}
\DeclareFontShape{U}{mathx}{m}{n}{
      <5> <6> <7> <8> <9> <10>
      <10.95> <12> <14.4> <17.28> <20.74> <24.88>
      mathx10
      }{}
\DeclareSymbolFont{mathx}{U}{mathx}{m}{n}
\DeclareFontSubstitution{U}{mathx}{m}{n}
\DeclareMathAccent{\widecheck}{0}{mathx}{"71}
\DeclareMathAccent{\wideparen}{0}{mathx}{"75}

\renewcommand{\epsilon}{\varepsilon}

%%%%%%%%%%%%%%%%%%%%%%%%%%%%%%%%%

\renewcommand{\O}{\rO}

\newcommand{\SO}{\mathrm{SO}}

\newcommand{\sym}{\operatorname{Sym}}

\newcommand{\kstack}{\rK_{0}(\mathrm\overline{{\rm Stack}_{k}})}

\newcommand{\kvar}{\rK_{0}(\mathrm\overline{{\rm Var}_{k}})}

\newcommand{\kcomp}{\widehat{\rK}_{0}(\mathrm\overline{{\rm Var}_{k}})}

%%%%%%%%%%%%%%%%%%%%%%%%%%%%%%%%%

\begin{document}

\title[]{The motivic class of the classifying stack\\of the special orthogonal group}

\author[Mattia Talpo]{Mattia Talpo}

\author[Angelo Vistoli]{Angelo Vistoli$^\dagger$}

\address[Talpo]{Department of Mathematics\\
Simon Fraser University\\
8888 University Drive\\
Burnaby BC\\
V5A 1S6 Canada}

\email[Talpo]{mtalpo@sfu.ca}

\address[Vistoli]{Scuola Normale Superiore\\Piazza dei Cavalieri 7\\
56126 Pisa\\ Italy}

\email[Vistoli]{angelo.vistoli@sns.it}

\subjclass[2010]{14D23, 14L99}

\thanks{$^\dagger$Partially supported by research funds from the Scuola Normale Superiore}
%\date{\arrday}

\maketitle

%\tableofcontents

\begin{abstract}
We compute the class of the classifying stack of the special orthogonal group in the Grothendieck ring of stacks, and check that it is equal to the multiplicative inverse of the class of the group.
\end{abstract}

\section{Introduction}

Let $k$ be a field. The Grothendieck ring of varieties $\kvar$ was first defined by Grothendieck in 1964 in a letter to Serre.  Its main application so far is Kontsevich's theory of motivic integration: see for example \cite{looijenga-motivic}.

Variants of this, that contain classes for all algebraic stacks of finite type over $k$ with affine stabilizers, have been introduced by several authors: see \cite{behrend-dhillon}, \cite{ekedahl-grothendieck}, \cite{joyce-motivic-invariant}, \cite{toen-grothendieck-stacks}. In the present paper we use the version due to Ekedahl, which we denote by $\kstack$; it has the merit of being universal, so it maps to all the other versions.

By definition, every algebraic stack $\cX$ of finite type over $k$ with affine stabilizers has a class $\{\cX\}$ in $\kstack$. In particular, given an affine group scheme of finite type $G$ over $k$, we obtain a class $\{\cB G\}$ for the classifying stack $\cB G$ in $\kstack$. The problem of computing $\{\cB G\}$ is very interesting; it is morally related with the problem of stable rationality of fields of invariants, although no direct implication is known (see the discussion in \cite[\S~6]{ekedahl-finite-group}).

The case of a finite group is thoroughly discussed in \cite{ekedahl-finite-group}; in many cases $\{\cB G\} = 1$, although there are examples of finite groups for which $\{\cB G\}\neq 1$ over every field $k$ \cite[Corollary 5.2]{ekedahl-finite-group}.

The case when $G$ is connected is also very interesting. Recall that an algebraic group is \emph{special} if every $G$-torsor is Zariski-locally trivial; $\GL_{n}$, $\SL_{n}$ and $\mathrm{Sp}_{n}$ are all special. If $P \arr S$ is a $G$-torsor and $G$ is special, then we have $\{P\} = \{G\}\{S\}$ (this is immediate when $S$ is a scheme, and it was shown by Ekedahl when $S$ is an algebraic stack). In particular, applying this to the universal torsor $\spec k \arr \cB G$ we get the formula $\{\cB G\} = \{G\}^{-1}$ for special groups.
 
The cases of non-special connected groups $G$ for which $\{\cB G\}$ has been computed include $\PGL_{2}$, $\PGL_{3}$ (by D.~Bergh in \cite{bergh-motivic-classes}) and $\SO_{n}$ when $n$ is odd (by A.~Dhillon and M.~Young in \cite{dhillon-young-SOn}). In all these cases the equality $\{\cB G\} = \{G\}^{-1}$ continues to hold. This is quite surprising, in view of the fact that if $G$ is a reductive non-special group and the characteristic of $k$ is $0$, there exists a $G$-torsor $P \arr S$ such that $\{P\} \neq \{G\}\{S\}$ \cite[Theorem~2.2]{ekedahl-approximation}. It might be related with the fact that quotient spaces of representations of connected algebraic groups tend to be stably rational; in fact, no examples are known in which they are not rational (see \cite{bohing-rationality} for a survey of the known results in this direction). Of course, since, as we say above, no direct implication is known to hold between the two problems, this is pure speculation on our part.

Our lack of insight into why the formula $\{\cB G\} = \{G\}^{-1}$ does hold is revealed by the fact that when it has been proved, it has been by independently computing the two sides. 

In this paper we continue in this line of research, and we compute the class $\{\cB \SO_{n}\}$ for all $n$.

\begin{named}{Theorem}
Assume that the characteristic of $k$ is different from $2$. Let $q$ be a non-degenerate split quadratic form on an $n$-dimensional $k$-vector space. %If the discriminant of $q$ in $k^{*}/k^{*2}$ equals $(-1)^{\lfloor n/2 \rfloor}$, then
Then
   \[
   \{\cB\SO(q)\} = \{\SO(q)\}^{-1}\,.
   \]
\end{named}

Once again, our result is obtained by explicitly computing $\{\cB\SO(q)\}$ (Theorem~\ref{thm:main}), and then comparing what we get with the formula for $\{\SO(q)\}$ that one obtains from \cite{behrend-dhillon}.

Our approach is different from that of A.~Dhillon and M.~Young in \cite{dhillon-young-SOn}, and also gives  an independent proof of their result. Instead of the stratification of the space $\GL_{n}/\O_{n}$ of non-degenerate quadratic forms that they use, we exploit a simpler stratification of the tautological representation of $\SO_{n}$ already used in \cite{molina-vistoli-classifying}.

In subsequent work \cite{spin}, R. Pirisi and the first author have computed the class of $\cB G$ for the exceptional group $G_2$, and the spin groups $\mathrm{Spin}_n$ for $n=7,8$, finding that the formula $\{\cB G\}=\{G\}^{-1}$ holds for these groups as well.

\subsection*{Acknowledgements} This paper has been inspired by conversations of the first author with A.~Dhillon and Z.~Reichstein, whom we thank warmly. We also thank M. Young and the anonymous referee for useful comments.

\section{The Grothendieck ring of algebraic stacks}\label{sec:Grothendieck-ring}

Recall that the Grothendieck ring of algebraic varieties is generated by classes $\{X\}$ of schemes of finite type over $k$, with the ``scissor'' relation $\{X\} = \{Y\} + \{X \setminus Y\}$ for any closed subscheme $Y \subseteq X$ (see for example \cite{looijenga-motivic}). The sum is given by taking disjoint unions, the product by the cartesian product of schemes. The \emph{Lefschetz motive} $\LL \eqdef \{\AA^{1}\}$ is particularly important. 

The localization ring $\kvar[\LL^{-1}]$ has a natural filtration, whose $m\th$ piece is generated by classes of the form $\{X\}\LL^{n}$, where $n \in \ZZ$ and $X$ is a scheme with $\dim X + n \leq -m$. The completion is denoted by $\kcomp$.

A variant of this is due to Ekedahl (\cite{ekedahl-grothendieck}): one takes classes $\{\cX\}$ of algebraic stacks of finite type with affine stabilizers, subject to the scissor relations, and the relation $\{\cV\} = \LL^{r}\{\cX\}$ whenever $\cV \arr \cX$ is a vector bundle of rank~$r$ (for schemes, vector bundles are locally trivial in the Zariski topology, so this relation is a consequence of the scissor relations, but this is definitely false for stacks). Ekedahl shows the remarkable fact that $\kstack$ is the localization of $\kvar$ obtained by inverting $\LL$ and all elements of the form $\LL^{n} - 1$ for $n\in \NN$; as a consequence, the natural map $\kvar \arr \kcomp$ factors through $\kstack$. (The fact that one can define classes in $\kcomp$ for algebraic stacks of finite type had been earlier shown by K.~Behrend and A.~Dhillon in \cite{behrend-dhillon}).

Next we will prove some easy results that will be used in the rest of the paper.

\begin{proposition}\label{prop:affine-bundle}
Let $\cX$ an algebraic stack of finite type over $k$ with affine stabilizers, $\cA \arr \cX$ an affine bundle of relative dimension $d$. Then we have $\{\cA\} = \LL^{d}\{\cX\}$ in $\kstack$.
\end{proposition}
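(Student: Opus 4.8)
The plan is to realize the affine bundle $\cA \arr \cX$ as the complement of a sub-vector-bundle inside a genuine vector bundle of rank $d+1$, so that the scissor relations and the vector bundle relation for $\kstack$ reduce the claim to an identity in which the invertible element $\LL - 1$ can be cancelled.

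First I would pass from affine bundles to torsors. An affine bundle of relative dimension $d$ is a torsor under its linear part, a rank-$d$ vector bundle $\cV \arr \cX$, and such torsors are classified by $\H^{1}(\cX, \cV) = \ext^{1}_{\cO_\cX}(\cO_\cX, \cV)$. Thus the class of $\cA$ is represented by an extension of $\cO_\cX$-modules
\[ 0 \arr \cV \arr \cE \arr \cO_\cX \arr 0. \]
Since $\cO_\cX$ is locally free, this sequence is locally split, so $\cE$ is a vector bundle of rank $d+1$; writing $\pi \colon \cE \arr \AA^{1}_\cX$ for the morphism of total spaces induced by $\cE \arr \cO_\cX$, the affine bundle $\cA$ is identified with the fibre $\pi^{-1}(1)$, while $\pi^{-1}(0)$ is the sub-vector-bundle $\cV \subseteq \cE$.

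Next I would stratify $\cE$ along $\pi$. The open complement of $\cV = \pi^{-1}(0)$ is $\pi^{-1}(\gm)$, and the scaling action of $\gm$ on the fibres of $\cE$ carries every nonzero fibre to the fibre $\pi^{-1}(1)$, producing an isomorphism $\pi^{-1}(\gm) \cong \cA \times \gm$ over $k$. The scissor relations then give
\[ \{\cE\} = \{\cV\} + \{\cA \times \gm\} = \{\cV\} + \{\cA\}(\LL - 1), \]
using $\{\gm\} = \LL - 1$ and multiplicativity of the class under products over $k$.

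Finally I would insert the vector bundle relations $\{\cE\} = \LL^{d+1}\{\cX\}$ and $\{\cV\} = \LL^{d}\{\cX\}$, which yields
\[ \LL^{d+1}\{\cX\} = \LL^{d}\{\cX\} + \{\cA\}(\LL - 1), \]
hence $\{\cA\}(\LL - 1) = \LL^{d}(\LL - 1)\{\cX\}$. As $\LL - 1$ is invertible in $\kstack$ by Ekedahl's description of the ring, cancelling it gives $\{\cA\} = \LL^{d}\{\cX\}$. The only genuinely delicate points are the reduction to the torsor/extension picture together with the verification that $\cE$ is honestly a vector bundle (so that the vector bundle relation applies to it) and the trivialization $\pi^{-1}(\gm) \cong \cA \times \gm$; the remainder is formal manipulation in the Grothendieck ring.
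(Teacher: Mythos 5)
Your proof is correct, but it follows a genuinely different route from the paper's. The paper argues via the structure group: it passes to the principal $(\GL_{d}\ltimes\AA^{d})$-bundle $\cP \arr \cX$ attached to $\cA$, invokes the fact that $\GL_{d}\ltimes\AA^{d}$ is special together with Ekedahl's theorem that torsors under special groups over \emph{stack} bases multiply classes, so that $\{\cP\} = \{\GL_{d}\ltimes\AA^{d}\}\{\cX\} = \{\GL_{d}\}\LL^{d}\{\cX\}$, then observes that $\cP \arr \cA$ is a $\GL_{d}$-torsor, whence $\{\cP\} = \{\GL_{d}\}\{\cA\}$, and finally cancels the invertible class $\{\GL_{d}\}$. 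You instead use the dictionary between affine bundles, torsors under the linear part $\cV$, and extensions $0 \arr \cV \arr \cE \arr \cO_{\cX} \arr 0$, realizing $\cA \times \gm$ as the complement of the sub-vector-bundle $\cV$ inside the total space of $\cE$; from there only the defining relations of $\kstack$ (scissor and vector-bundle relations) and the invertibility of $\LL-1$ are needed. Your route buys independence from the special-group torsor theorem (\cite[Proposition 1.4~i)]{ekedahl-grothendieck}), a nontrivial external input, replacing it by the purely sheaf-theoretic torsor--extension correspondence $\H^{1}(\cX,\cV) = \ext^{1}_{\cO_{\cX}}(\cO_{\cX},\cV)$; on a stack this correspondence should be taken in the lisse-\'etale (equivalently fppf) topology, and one should note explicitly that the middle term $\cE$ is automatically quasi-coherent and locally free of rank $d+1$ --- standard points, but they are where the technical weight of your argument sits. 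Both arguments conclude by cancelling an element that is invertible thanks to Ekedahl's localization description of $\kstack$: $\{\GL_{d}\}$ in the paper, $\LL-1$ in yours.
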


\begin{proof}
The structure group of $\cA$ is, by definition, the semidirect product $\GL_{d} \ltimes \AA^{d}$, which is a special group. If $\cP \arr \cX$ is the principal $\GL_{d} \ltimes \AA^{d}$-bundle associated with $\cA \arr \cX$, then $\{\cP\} = \{\GL_{d} \ltimes \AA^{d}\}\{\cX\} = \{\GL_{d}\}\LL^{d}\{\cX\}$ (\cite[Proposition 1.4~i)]{ekedahl-grothendieck}). On the other hand $\cA$ is the quotient $\cP/\GL_{d}$, so $\cP$ is a $\GL_{d}$-torsor over $\cA$, hence $\{\cP\} = \{\GL_{d}\}\cA$, and the result follows.
\end{proof}

\begin{proposition}\label{prop:semidirect}
Let $G$ be an affine algebraic group over $k$ acting linearly on a $d$-dimensional vector space $V$, considered as a group scheme via addition. Then we have
   \[
   \{\cB (G \ltimes V)\} = \LL^{-d} \{\cB G\}\,.
   \]
\end{proposition}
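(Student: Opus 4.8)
The plan is to analyse the morphism $\pi\colon\cB(G\ltimes V)\arr\cB G$ induced by the projection $G\ltimes V\twoheadrightarrow G$, and to show that passing to motivic classes multiplies by $\LL^{-d}$. The geometric input is the identification of $\pi$ as a relative classifying stack. Writing $E\eqdef[V/G]$ for the rank\dash$d$ vector bundle on $\cB G$ attached to the linear representation $V$ of $G$, I would first check that $\cB(G\ltimes V)$ is canonically the stack over $\cB G$ whose fibre over a $G$\dash torsor $Q\arr T$ is the classifying stack of the additive group scheme of the vector bundle $Q\times^{G}V$ over $T$. Concretely, a $(G\ltimes V)$\dash torsor $P$ determines the $G$\dash torsor $\overline P=P/V$, and the projection $P\arr\overline P$ is a torsor under the group scheme $E_{\overline P}\eqdef\overline P\times^{G}V$, because the conjugation action of $G\ltimes V$ on its normal subgroup $V$ factors through the linear action of $G$. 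Taking $\overline P$ to be the universal $G$\dash torsor identifies $\cB(G\ltimes V)$ with the relative classifying stack of $E$ over $\cB G$.

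The second step trivialises $E$ by a torsor under a \emph{special} group, so that Ekedahl's multiplicativity applies even though $G$ itself need not be special. Let $\mathrm{Fr}(E)\arr\cB G$ be the $\GL_{d}$\dash frame bundle of $E$; it is a $\GL_{d}$\dash torsor, and since $\GL_{d}$ is special we get $\{\mathrm{Fr}(E)\}=\{\GL_{d}\}\{\cB G\}$ by \cite[Proposition~1.4~i)]{ekedahl-grothendieck}. Over $\mathrm{Fr}(E)$ the bundle $E$ becomes the constant group $V\cong\ga^{d}$, so pulling $\pi$ back along $\mathrm{Fr}(E)\arr\cB G$ and using the description from the first step gives a canonical isomorphism
\[
\mathrm{Fr}(E)\times_{\cB G}\cB(G\ltimes V)\;\cong\;\mathrm{Fr}(E)\times\cB V\,.
\]
This fibre product is the pullback of $\mathrm{Fr}(E)\arr\cB G$ along $\pi$, hence a $\GL_{d}$\dash torsor over $\cB(G\ltimes V)$; applying multiplicativity once more yields $\{\mathrm{Fr}(E)\times\cB V\}=\{\GL_{d}\}\{\cB(G\ltimes V)\}$.

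To finish, I would combine these identities. On the left\dash hand side $\{\mathrm{Fr}(E)\times\cB V\}=\{\mathrm{Fr}(E)\}\{\cB V\}=\{\GL_{d}\}\{\cB G\}\{\cB V\}$, and $\{\cB V\}=\{V\}^{-1}=\LL^{-d}$ because the additive group $\ga$, and hence $V\cong\ga^{d}$, is special. Thus $\{\GL_{d}\}\{\cB(G\ltimes V)\}=\{\GL_{d}\}\LL^{-d}\{\cB G\}$, and cancelling the invertible element $\{\GL_{d}\}$ gives the desired formula $\{\cB(G\ltimes V)\}=\LL^{-d}\{\cB G\}$.

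The step I expect to be the main obstacle is the first one: making precise, over the stacky base $\cB G$, the claim that torsors under the extension $1\arr V\arr G\ltimes V\arr G\arr 1$ coincide with torsors under the relative kernel $E=[V/G]$, together with the compatible trivialisation over $\mathrm{Fr}(E)$. This is the only genuinely geometric point; once it is in place, everything reduces to two applications of multiplicativity for the special group $\GL_{d}$ and the invertibility of $\{\GL_{d}\}$, exactly as in the proof of Proposition~\ref{prop:affine-bundle}. Note that if one only wanted the case of special $G$ one could argue more directly, since then $\cB V\arr\cB(G\ltimes V)$ is already a torsor under the special group $G$; the frame bundle is precisely the device that removes the hypothesis that $G$ be special.
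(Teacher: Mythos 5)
Your argument is correct, but it takes a genuinely different route from the paper's. Your key step --- identifying $\pi\colon\cB(G\ltimes V)\arr\cB G$ with the relative classifying stack of $E=[V/G]$ --- is valid: since the extension $1\arr V\arr G\ltimes V\arr G\arr 1$ is split and $V$ is abelian, the stack of liftings of a $G$\dash torsor $Q$ is a neutral gerbe, equivalent to the classifying stack of $Q\times^{G}V$; and the remaining steps (multiplicativity for the special groups $\GL_{d}$ and $\ga^{d}$, plus invertibility of $\{\GL_{d}\}$ in $\kstack$) are all legitimate. The paper instead runs the argument in the opposite direction: it lets $G\ltimes V$ act on $V$ itself by affine transformations (the linear action of $G$ combined with translations by $V$), observes that this action is transitive with stabilizer $G$ at the origin, so that $[V/(G\ltimes V)]\simeq\cB G$, and then views $[V/(G\ltimes V)]$ as an affine bundle of relative dimension $d$ over $\cB(G\ltimes V)$, so that Proposition~\ref{prop:affine-bundle} gives $\{\cB G\}=\LL^{d}\{\cB(G\ltimes V)\}$ at once. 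In effect, the paper exhibits the \emph{section} of your fibration $\pi$ (induced by the splitting $G\into G\ltimes V$) as an affine bundle over its target, rather than exhibiting $\pi$ itself as a twisted form of $\cB V$ over $\cB G$. What the paper's route buys is brevity and the complete avoidance of the gerbe/relative-classifying-stack formalism --- exactly the step you flag as the main obstacle --- since the frame-bundle trivialization and the cancellation of a class of a special group are already packaged once and for all in Proposition~\ref{prop:affine-bundle}, whose proof is precisely your trivialize-and-cancel argument run for the special group $\GL_{d}\ltimes\AA^{d}$. What your route buys in exchange is a structural description of the morphism $\pi$ itself, which makes it transparent that passing to classes multiplies by $\{\cB V\}=\LL^{-d}$, and it specializes, as you note, to a one-line argument when $G$ is special.
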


\begin{proof}
The group $G$ acts on $V = \AA^{d}$ by definition, while $V$ acts on itself by translation. These two actions combine to give an action of $G \ltimes V$ on $V$, which factors through the group of affine transformations $\GL(V)\ltimes V$. The action of $G\ltimes V$ on $V$ is transitive, and the stabilizer of the origin is $G$, hence $[V/(G \ltimes V)] \simeq \cB G$. On the other hand we can consider $[V/(G \ltimes V)]$ as an affine bundle on $\cB (G \ltimes V)$, so the result follows from Proposition~\ref{prop:affine-bundle}.
\end{proof}

\section{The computation}  
Let $k$ be a field of characteristic different from $2$, and $q$ be a non-degenerate quadratic form on an $n$-dimensional $k$-vector space, $\O(q)$ the corresponding orthogonal group over $k$, and $\SO(q) \subseteq \O(q)$ the connected component of the identity.

\begin{theorem}\label{thm:main}
In $\kstack$ we have the equality
   \[
   \{\cB\O(q)\} = 
   \begin{cases}\displaystyle
   \LL^{-m^{2} + 2m}\prod_{i = 1}^{m}(\LL^{2i} - 1)^{-1} &\text{if }\/n = 2m
   \vspace{3pt}\\
   \displaystyle
   \LL^{-m^{2}} \prod_{i = 1}^{m}(\LL^{2i} - 1)^{-1}&\text{if }\/n = 2m + 1\, .
   \end{cases}
   \]
Furthermore, 
  \[
   \{\cB\SO(q)\} = \{\cB\O(q)\}
   \]
if $n$ is odd, while  
  \[
   \{\cB\SO(q)\} = (1+\LL^{-m})\{\cB\O(q)\}
   \]
if $n=2m$ and $q$ is split.
\end{theorem}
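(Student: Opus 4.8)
The plan is to compute the class of the classifying stack of the orthogonal (and special orthogonal) group by exploiting the stratification of its tautological representation, as advertised in the introduction. The tautological representation $V$ of $\O(q)$ is the underlying $n$-dimensional vector space with its quadratic form. The key geometric input is that $\O(q)$ acts on $V$ with orbits indexed by the value $q(v) \in \AA^1$, together with the single closed orbit $\{0\}$. More precisely, for each nonzero scalar $c$, the fiber $q^{-1}(c)$ is a single $\O(q)$-orbit (a smooth affine quadric), while $q^{-1}(0) \setminus \{0\}$ is the punctured isotropic cone, which is again a single orbit when $q$ is split. I would first set up a stratification of $V$ into these invariant locally closed pieces and apply the scissor relations in $\kstack$, which will express $\LL^n \{\cB\O(q)\} = \{[V/\O(q)]\}$ (using Proposition~\ref{prop:affine-bundle}, since $[V/\O(q)] \arr \cB\O(q)$ is a vector bundle of rank $n$) as a sum of classes of the quotient stacks of the strata.

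Each stratum, being a single orbit, has quotient stack $\cB H$ where $H$ is the stabilizer of a representative point. The main computation then reduces to identifying these stabilizers and their classifying-stack classes. For a point $v$ with $q(v) = c \neq 0$, the stabilizer is $\O(q')$ on the $(n-1)$-dimensional orthogonal complement $v^\perp$, where $q'$ is the restricted form. For a nonzero isotropic vector, the stabilizer is an extension involving $\O$ of an $(n-2)$-dimensional form together with a unipotent radical, and here Proposition~\ref{prop:semidirect} is the essential tool: it lets me strip off the vector-group (translation) part of a semidirect product $H \ltimes W$, replacing $\{\cB(H \ltimes W)\}$ by $\LL^{-\dim W}\{\cB H\}$. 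This is precisely why those two elementary propositions were proved up front. Assembling the scissor relation gives a recursion expressing $\{\cB\O(q_n)\}$ in terms of $\{\cB\O(q_{n-2})\}$, which I would then solve by induction on $n$ to obtain the two closed-form products stated in the theorem, the base cases $n=0,1$ being trivial.

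For the special orthogonal group, the clean case is $n$ odd: there $\O(q) \cong \SO(q) \times \mmu_2$ (the element $-\id$ has determinant $-1$ and generates a central $\mmu_2$ complementing $\SO(q)$), so $\cB\O(q) \simeq \cB\SO(q) \times \cB\mmu_2$, and since $\mmu_2$ is a finite group scheme with $\{\cB\mmu_2\} = 1$ in $\kstack$ (as $\mathbb{G}_m \arr \mathbb{G}_m$, squaring, realizes $\cB\mmu_2$ as a quotient making its class trivial, or directly by Ekedahl's results), one gets $\{\cB\SO(q)\} = \{\cB\O(q)\}$. For $n = 2m$ even and $q$ split, $-\id$ lies in $\SO(q)$, so this splitting fails; instead I would use the determinant map, which exhibits $\SO(q) \arr \O(q) \arr \mmu_2$ as a degree-two cover. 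The factor $(1 + \LL^{-m})$ should emerge by running the same tautological-representation stratification directly for $\SO(q)$: the difference from the $\O(q)$ computation is that the isotropic cone's punctured part splits into two $\SO(q)$-orbits (the two rulings/families of maximal isotropic subspaces are no longer exchanged), and tracking this doubling through the recursion produces exactly the extra $(1+\LL^{-m})$ factor relating $\{\cB\SO(q)\}$ to $\{\cB\O(q)\}$.

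I expect the main obstacle to be the even split case, on two fronts. First, the orbit structure of $V$ under $\SO(q)$ is genuinely more delicate than under $\O(q)$: the splitting of isotropic or other strata into multiple $\SO$-orbits, and the precise identification of the (possibly disconnected) stabilizers together with their classifying-stack classes, requires care about connected components and about how the determinant/discriminant interacts with the form on orthogonal complements. Second, one must verify that the class $\{\cB\mmu_2\}$ and related finite-group-scheme contributions behave as claimed in $\kstack$ (using that $\LL^n - 1$ is invertible), and that the bookkeeping of the recursion cleanly yields the factor $(1+\LL^{-m})$ rather than some other power of $\LL$; getting the exponent right is where the split hypothesis on $q$ must be used decisively.
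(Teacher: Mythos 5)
Your overall skeleton --- stratify the tautological representation $V$, identify stabilizers of the strata, strip off unipotent parts with Proposition~\ref{prop:semidirect}, and induct on $n$ --- is exactly the paper's strategy, and your treatment of the origin and of the isotropic stratum (stabilizer $G_{n-2}\ltimes W$, contributing $\LL^{-n+2}\{\cB G_{n-2}\}$) matches the paper. The genuine gap is the anisotropic locus $B=\{q\neq 0\}\subseteq V\setminus\{0\}$. It is not a single orbit but a one-parameter family of orbits $q^{-1}(c)$, $c\in\gm$, so you cannot write $\{[B/G_n]\}$ as a finite sum of classes $\{\cB H\}$ of stabilizers: the scissor relations apply only to finite decompositions into invariant locally closed pieces, and your phrase ``each stratum, being a single orbit'' simply does not apply to $B$. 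The implicit fix --- treating $[B/G_n]\arr\gm$ as a trivial fibration with fiber $\cB(\operatorname{Stab})$, so that $\{[B/G_n]\}=(\LL-1)\{\cB\operatorname{Stab}\}$ --- is precisely what must be proved, and for $\SO_n$ with $n$ odd it is \emph{false}. The paper's key device, absent from your proposal, is the double cover $\gm\times Q\arr B$, $(t,x)\arrto tx$, with $Q=q^{-1}(1)$: one has $B=(\gm\times Q)/\mmu_2$, hence $[B/G_n]=[(\gm\times Q)/(\mmu_2\times G_n)]$, whose class is $(\LL-1)\{[Q/(\mmu_2\times G_n)]\}$. The stabilizer of $p\in Q$ under $\mmu_2\times G_n$ sits inside $\mmu_2\times\O_n$ via the twisted embedding $(\alpha,M)\arrto\bigl(\alpha,\alpha i(M)\bigr)$, and for $G_n=\SO_n$ with $n$ odd it is isomorphic to $\O_{n-1}$, not to $\mmu_2\times\SO_{n-1}$; the resulting contribution $(\LL-1)\{\cB\O_{n-1}\}$ differs from the naive guess $(\LL-1)\{\cB\SO_{n-1}\}$ by exactly a factor of the kind you are trying to produce, so this subtlety cannot be waved away.

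Moreover, your proposed mechanism for the factor $(1+\LL^{-m})$ when $n=2m$ --- that the punctured null cone $C$ breaks into two $\SO(q)$-orbits --- is geometrically false for $n\geq 3$. The stabilizer in $\O_n$ of an isotropic vector is $\O_{n-2}\ltimes W$, which contains elements of determinant $-1$ as soon as $n-2\geq 1$; therefore $\O_n=\SO_n\cdot\operatorname{Stab}_{\O_n}(e_1)$, and $\SO_n$ still acts transitively on $C$. (What does split into two $\SO$-families are the maximal isotropic \emph{subspaces}; the nonzero isotropic vectors form a single orbit.) In the paper, the even-$n$ recursions for $\O_n$ and $\SO_n$ have identical shape, namely $\{\cB G_n\}=(\LL^n-1)^{-1}\bigl((\LL-1)\{\cB G_{n-1}\}+\LL^{-n+2}\{\cB G_{n-2}\}\bigr)$; the factor $(1+\LL^{-m})$ falls out of the induction solely because the seeds differ, $\{\cB\SO_2\}=\{\cB\gm\}=(\LL-1)^{-1}$ versus $\{\cB\O_2\}=\LL(\LL^2-1)^{-1}$, together with the identity $\{\cB\SO_{n-1}\}=\{\cB\O_{n-1}\}$ at the odd levels. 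So the two points where your plan would actually fail --- the class of $[B/G_n]$ and the origin of the correction factor --- are exactly the computations that constitute the heart of the proof.
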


The formulas for $\{\cB\O(q)\}$, and $\{\cB\SO(q)\}$ when $n$ is odd, are contained in \cite{dhillon-young-SOn}.

\begin{proof}
Let us assume right away that $q$ is split and pick a basis for $V\simeq k^n$ that puts  $q$ in the standard form%We will denote by $q_{n}$ the quadratic form on $V \eqdef k^{n}$ defined by
   \[
   q_{n}(x_{1}, \dots, x_{n}) =
   x_1x_{2} + x_{3} x_{4}\dots + x_{2m-1}x_{2m}
   \]
when $n = 2m$, and
   \[
   q_{n}(x_{1}, \dots,
   x_{n}) = x_1x_{2} + x_{3} x_{4}\dots + x_{2m-1}x_{2m}+ x_{2m+1}^{2}
   \]
when $n = 2m+1$. We will denote by $\O_{n}$ the algebraic group of linear transformations preserving this quadratic form, by $h_{n}\colon V \times V \arr k$ the corresponding symmetric bilinear form, and by $\SO_{n}$ the corresponding special orthogonal group.

The arguments that follow will also compute $\{\cB\O(q)\}$ for any non-degenerate quadratic form $q$, because if $q$ and $q'$ are non-degenerate quadratic forms on $V$ then we have an equivalence of stacks $\cB\O(q) \simeq \cB\O(q')$ (see \cite[Remark 4.2]{molina-vistoli-classifying}). Moreover, note that regardless of the fact that $q$ is split or not, if $n$ is odd we have $\O(q)\simeq \mmu_2\times \SO(q)$, and hence $\{\cB\O(q)\}=\{\cB\SO(q)\}$. If $n$ is even, we do need to assume that $q$ is split, in order to carry out our computation of $\{\cB\SO(q)\}$.

Let us proceed by induction. We will include the case $n = 0$ in the formula for $\{\cB\O_{n}\}$; we take $\O_{0}$ to be the trivial group, so that the formula holds in this case. 

For $n = 1$ we have $\SO_{1} = \{1\}$, while $\O_{1} = \mmu_{2}$, so that $\{\cB\O_{1}\} = \{\cB\SO_{1}\} = 1$, and the theorem is verified in this case. So we can assume $n \geq 2$.

We also have $\SO_{2} = \gm$; in this case $\{\cB\SO_{2}\} = (\LL-1)^{-1}$, and Theorem~\ref{thm:main} is verified. In the rest of the proof we will exclude this case.

We will denote $e_{1}$, \dots,~$e_{n}$ the standard basis of $V$. We will identify $V$ with the corresponding affine space $\spec (\sym_{k}V^{\vee})$ over $k$, and denote by $V^{0}$ the complement of the origin in $V$. We set $C$ the closed subscheme of $V^{0}$ defined by the vanishing of $q_{n}$, and $B \eqdef V^{0} \setminus C$. 
The subschemes $C$, $B$ %and $Q$
of $V^{0}$ are invariant by the natural action of $\O_{n}$. 

In order to compute $\{\cB\O_{n}\}$ and $\{\cB\SO_{n}\}$, notice that if we denote by $G_{n}$ either $\O_{n}$ or $\SO_{n}$ we have
   \[
   \LL^{n}\{\cB G_{n}\} = \{[V/G_{n}]\} = \{[V^{0}/G_{n}]\} + \{\cB G_{n}\}
   \]
so that
   \[
   \{\cB G_{n}\} = (\LL^{n} - 1)^{-1}\{[V^{0}/G_{n}]\}\,.
   \]
On the other hand
   \[
   \{[V^{0}/G_{n}]\} = \{[C/G_{n}]\} + \{[B/G_{n}]\}\,.
   \]
The hypothesis that $n \geq 2$ and $G_{n} \neq \SO_{2}$ ensures that the action of $G_{n}$ on $C$ is transitive. Split $V$ as $\generate{e_{1}, e_{2}} \oplus W$, where $W \eqdef \generate{e_{1}, e_{2}}^{\perp}$; then $\O(W) = \O_{n-2}$, and a simple calculation shows that the stabilizer of $e_{1}$ for the action of $G_{n}$ on $C$ is of the form $G_{n-2} \ltimes W$. The action of a vector $v \in W$ is defined as leaving $e_{1}$ fixed, sending $e_{2}$ to $-q_{n}(v)e_{1} + e_{2} + v$, and $x \in W$ to $x - 2 h_{n}(x, v)e_1$ (recall that $h_n$ is the symmetric bilinear form associated with $q_n$, i.e. $h_n(v,w)=\frac{1}{2}(q_n(v+w)-q_n(v)-q_n(w))$). Thus
   \[
   [C/G_{n}] = \cB(G_{n-2} \ltimes W)\,.
   \]
From Proposition~\ref{prop:semidirect} we obtain
   \[
   \{[C/G_{n}]\} = \LL^{-n+2}\{\cB G_{n-2}\}\,.
   \]

Let us compute $\{[B/G_{n}]\}$; here the action of $G_{n}$ on $B$ is not transitive, and we need a more elaborate construction, taken from \cite[Section 4]{molina-vistoli-classifying}.

Call $Q$ the closed subscheme of $V^{0}$ defined by $q_{n}(x) = 1$. This is also invariant for the action of $\O_{n}$. We have a natural double cover $\gm \times Q \arr B$ defined by $(t, x) \arrto tx$; there is also a free action of $\mmu_{2}$ on $\gm \times Q$ defined by $\alpha(t,x) = (\alpha t, \alpha x)$, and $B = (\gm \times Q)/\mmu_{2}$.

If we let $G_{n}$ act on $\gm \times Q$ by acting on $Q$ by the restriction of the action on $V$, and trivially on $\gm$, then the action of $G_{n}$ commutes with the action of $\mmu_{2}$ described above, so we get an action of $\mmu_{2} \times G_{n}$, and we have $[B/G_{n}] = [(\gm \times Q)/(\mmu_{2} \times G_{n})]$. On the other hand we have
   \begin{align*}
   \{[(\gm \times Q)/(\mmu_{2} \times G_{n})]\} &=
   \{[(\AA^{1}\times Q)/(\mmu_{2} \times G_{n})]\} -
   \{[Q/(\mmu_{2} \times G_{n})]\}\\
   &= (\LL - 1)\{[Q/(\mmu_{2} \times G_{n})]\}\,.
   \end{align*}
The action of $G_{n}$ on $Q$ is transitive; the stabilizer of a point $p \in Q$ under the action of $\O_{n}$ is isomorphic to $\O_{n-1}$ (see the discussion in \cite[Section 4]{molina-vistoli-classifying}). Call $i\colon \O_{n-1} \subseteq \O_{n}$ the embedding of the stabilizer of $p$; the stabilizer of $p$ under the action of $\mmu_{2} \times \O_{n}$ is the image of $\mmu_{2} \times \O_{n-1}$ under the embedding $j\colon \mmu_{2} \times \O_{n-1} \subseteq \mmu_{2} \times \O_{n}$ defined by $(\alpha, M) \arrto \bigl(\alpha, \alpha i(M)\bigr)$. The stabilizer of $p$ under the action of $\mmu_{2} \times \SO_{n}$ is the inverse image  $j^{-1}(\mmu_{2} \times \SO_{n})$. If $n$ is even, then $j^{-1}(\mmu_{2} \times \SO_{n}) = \mmu_{2} \times \SO_{n-1}$. If $n$ is odd, then $j^{-1}(\mmu_{2} \times \SO_{n})$ is the set of pairs $(\alpha, M) \in \mmu_{2} \times \O_{n-1}$ with $\alpha = \det M$; this is clearly isomorphic to $\O_{n-1}$ via the projection on the second factor. So we have
   \[
   [Q/(\mmu_{2} \times \O_{n})] = [\cB(\mmu_{2} \times \O_{n-1})]\,,
   \]
and furthermore
   \[
   [Q/(\mmu_{2} \times \SO_{n})] = [\cB(\mmu_{2} \times \SO_{n-1})]
   \]
if $n$ is even, while
   \[
   [Q/(\mmu_{2} \times \SO_{n})]  =[\cB\O_{n-1}]
   \]
if $n$ is odd. 

So %(since $\{\cB \mmu_2 \}=1$, by \cite[Proposition 3.2]{ekedahl-finite-group})
we obtain
   \[
   \{[B/\O_{n}]\} = (\LL - 1)\{\cB\O_{n-1}\} \,,
   \]
while
   \[
   \{[B/\SO_{n}]\} = (\LL - 1)\{\cB\SO_{n-1}\}
   \]
if $n$ is even, and
   \[
   \{[B/\SO_{n}]\} = (\LL - 1)\{\cB\O_{n-1}\}
   \]
if $n$ is odd.

Putting everything together, in the case of $\O_{n}$ we get
   \[
   \{\cB\O_{n}\} = (\LL^{n} - 1)^{-1}\bigl((\LL - 1)\{\cB\O_{n-1}\} + \LL^{-n+2}\{\cB\O_{n-2}\}\bigr)\,;
   \]
a simple calculation shows that the formulas for $\{\cB\O_n\}$ given in the statement verify this recursion (starting from the base values $\{\cB\O_0\} = \{\cB\O_1\}= 1$). 

If $n=2m+1$, then $\O_{n} = \mmu_{2} \times \SO_{n}$, hence

 \[
   \{\cB\SO_{n}\} = \{\cB\O_{n}\} = \LL^{-m^{2}} \prod_{i = 1}^{m}(\LL^{2i} - 1)^{-1}\,.
   \]
If $n$ is even, then we obtain the relation 
   \[
   \{\cB\SO_{n}\} = (\LL^{n} - 1)^{-1}\bigl((\LL - 1)\{\cB\SO_{n-1}\} +
      \LL^{-n+2}\{\cB\SO_{n-2}\}\bigr)\,.
   \]
By the formula for $\{\cB\SO_{n}\}$ for $n$ odd and a simple induction with the recurrence in the last line and the starting case $\{\cB \SO_{2}\} = (\LL - 1)^{-1}$, one can verify that indeed
  \begin{align*}
   \{\cB\SO_{n}\} & =  \LL^{-m^2 + m}(\LL^m -1)^{-1}
   \prod_{i=1}^{m-1} (\LL^{2i}-1)^{-1}\\
  & =   (1+\LL^{-m})\{\cB\O_n\}
   \end{align*}
where $n=2m$.

This completes the proof of Theorem~\ref{thm:main}.
\end{proof}

\begin{corollary}
If $q$ is a non-degenerate split quadratic form, then
%If the discriminant of $q$ in $k^{*}/k^{*2}$ equals $(-1)^{\lfloor n/2 \rfloor}$, then
   \[
   \{\cB\SO(q)\} = \{\SO(q)\}^{-1}\,.
   \]
\end{corollary}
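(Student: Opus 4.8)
The plan is to read off $\{\cB\SO(q)\}$ from Theorem~\ref{thm:main} and to compare it with the motivic class of the variety $\SO(q)$ itself. Since $\SO(q)$ is a smooth linear algebraic group, $\{\SO(q)\}$ lies in the image of $\kvar \arr \kstack$, and for split $q$ it is computed in \cite{behrend-dhillon}. Writing $n = 2m+1$ in the odd case and $n = 2m$ in the even (split) case, the classes in question are
\[
\{\SO_{2m+1}\} = \LL^{m^{2}}\prod_{i=1}^{m}(\LL^{2i}-1), \qquad
\{\SO_{2m}\} = \LL^{m^{2}-m}(\LL^{m}-1)\prod_{i=1}^{m-1}(\LL^{2i}-1)\,.
\]
Before comparing, I would record the sanity check that the $\LL$-degree of each expression equals $\dim\SO_{n} = \binom{n}{2}$, namely $2m^{2}+m$ when $n=2m+1$ and $2m^{2}-m$ when $n=2m$, in each case with leading coefficient $1$; and that the smallest cases agree with $\{\SO_{2}\} = \{\gm\} = \LL-1$ and $\{\SO_{3}\} = \{\PGL_{2}\} = \LL(\LL^{2}-1)$, the latter obtained from the $\gm$-torsor $\GL_{2} \arr \PGL_{2}$.

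With these formulas in hand the corollary is a purely formal verification. In the odd case Theorem~\ref{thm:main} gives $\{\cB\SO_{2m+1}\} = \LL^{-m^{2}}\prod_{i=1}^{m}(\LL^{2i}-1)^{-1}$, which is literally the inverse of $\{\SO_{2m+1}\}$, so the product of the two equals $1$. In the even case Theorem~\ref{thm:main} gives
\[
\{\cB\SO_{2m}\} = \LL^{-m^{2}+m}(\LL^{m}-1)^{-1}\prod_{i=1}^{m-1}(\LL^{2i}-1)^{-1}\,,
\]
which is again factor-by-factor the inverse of $\{\SO_{2m}\}$; multiplying yields $1$. This establishes $\{\cB\SO(q)\} = \{\SO(q)\}^{-1}$ in both parities, and since the split hypothesis is precisely what is needed to run the even computation in Theorem~\ref{thm:main}, the assumption of the corollary covers exactly the cases in which the left-hand side has been computed.

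The one place where care is genuinely needed is in transcribing $\{\SO(q)\}$ from \cite{behrend-dhillon}: one must match their indexing and their normalization of the split form to the conventions fixed at the start of the proof of Theorem~\ref{thm:main}, and in the even case keep track of the distinguished factor $(\LL^{m}-1)$, which replaces the top factor $(\LL^{2m}-1)$ of the odd case and reflects the smaller Weyl group of type $D_{m}$. I do not anticipate a substantive obstacle; the entire content of the corollary is the observation that the two independently obtained expressions for $\{\cB\SO(q)\}$ and $\{\SO(q)\}$ are mutually inverse, which is exactly the phenomenon highlighted in the introduction. Should a self-contained derivation of $\{\SO(q)\}$ be preferred over citing \cite{behrend-dhillon}, one could instead use the Bruhat decomposition of the split group to write $\{\SO(q)\} = (\LL-1)^{r}\LL^{N}P_{W}(\LL)$, with $r$ the rank, $N$ the number of positive roots, and $P_{W}$ the Poincar\'e polynomial of the Weyl group, and then carry out the same comparison.
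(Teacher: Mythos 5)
Your proof is correct and takes essentially the same route as the paper: both read $\{\cB\SO_n\}$ off Theorem~\ref{thm:main}, quote the formula of \cite{behrend-dhillon} for the class of the split group (your versions with positive exponents are equivalent rewritings of the paper's $\LL^{2m^2+m}\prod(1-\LL^{-2i})$ and $\LL^{2m^2-m}(1-\LL^{-m})\prod(1-\LL^{-2i})$), and check that the two expressions are mutually inverse. The one caveat the paper adds, which you should note, is that \cite{behrend-dhillon} state their formula only in $\kcomp$ while their proof actually yields it in $\kstack$ (an equality in $\kcomp$ alone would not suffice, as $\kstack \arr \kcomp$ is not known to be injective); your Bruhat-decomposition fallback also resolves this, since that argument produces the formula directly in $\kvar$.
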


\begin{proof}
The formula for the class $\{G\} \in \kstack $ of a split connected semisimple group given in \cite[Proposition 2.1]{behrend-dhillon} gives that if $n=2m+1$, then
  \[
   \{\SO_{n}\} = \LL^{2m^2+m}\prod_{i=1}^{m}(1-\LL^{-2i})\, ,
   \]
while if $n=2m$
  \[
   \{\SO_{n}\} = \LL^{2m^2-m}(1-\LL^{-m})\prod_{i=1}^{m-1}(1-\LL^{-2i}) \, .
   \]
Easy algebraic manipulations show that in both cases $\{\cB\SO_n\}=\{\SO_n\}^{-1}$.

(Actually, in \cite{behrend-dhillon} the result is only claimed for the class $\{\SO_{n}\}$ in $\kcomp$, but the proof shows that the formula does in fact hold in $\kstack$.)
\end{proof}

\begin{remark}
We do not know whether the result remains true in characteristic~$2$, or when $q$ is not split. The proof of the formula of Theorem~\ref{thm:main} does use the fact that $q$ is split for even $n$, and probably \cite[Proposition 2.1]{behrend-dhillon} does not work for non-split reductive groups. 
\end{remark}

\bibliographystyle{alpha}
\bibliography{SOn}

\end{document}